\title[GWShapeCon]{Formation Shape Control using the Gromov-Wasserstein Metric}
\begin{document}

\maketitle

\begin{abstract}%
This article introduces a formation shape control algorithm, in the optimal control framework, for steering an initial population of agents to a desired configuration via employing the Gromov-Wasserstein distance. The underlying dynamical system is assumed to be a constrained linear system and the objective function is a sum of quadratic control-dependent stage cost and a Gromov-Wasserstein terminal cost. The inclusion of the Gromov-Wasserstein cost transforms the resulting optimal control problem into a well-known NP-hard problem, making it both numerically demanding and difficult to solve with high accuracy. Towards that end, we employ a recent semi-definite relaxation-driven technique to tackle the Gromov-Wasserstein distance. A numerical example is provided to illustrate our results.
\end{abstract}

\begin{keywords}%
  Formation shape control, Optimal control, Optimal transport, Gromov-Wasserstein distance
\end{keywords}

\section{Introduction}


The Gromov-Wasserstein (GW) distance (\cite{ref:FM:GW:2011}) is an important and popular optimal transport (OT) distance which is often employed to compare distributions whose underlying spaces are \emph{incomparable}, as opposed to the classical OT distances, for example, the Wasserstein distance. The GW distance recently gained ample amount of attention in various scientific communities ranging from machine learning \cite{ref:GWML:graph_partitioning} to mathematical finance \cite{ref:GWML:math_finance} and system biology \cite{ref:GWML:cell_biology}. The GW distance is invariant under isometric transformations, such as translation, rotation, and reflection, and thus it is especially useful in applications where the underlying metric geometry, for example, shape or formation, needs to be preserved; see \cite{ref:GWML:allignment}, \cite{ref:GWML:shape_matching}, \cite{ref:GWML:spectral_shape_matching}, \cite{ref:GWML:graph_matching}.

This article establishes a formation/shape control algorithm in the minimum energy optimal control framework by employing the GW distance and leveraging its invariance and metric properties. We outline our primary motivation: 
\vspace{1mm}
\begin{enumerate}[leftmargin=*, widest=b, align=left]

\item \label{motivation:I} Traditional control approaches, such as leader-follower or distance-based methods \cite{ref:DISFORMCTRL:2008, ref:DISFORMCTRL:2016}, rely on specifying and pre-assigning precise target positions or inter-agent distances within a fixed reference frame. While these methods work well for simple, static formations, they are often rigid and struggle to adapt to dynamic environments or objectives that do not require strict positional accuracy. This rigidity makes them less effective in scenarios where the overall shape or configuration of the agents is essential, while the distances between each pair of agents are of secondary importance. In contrast, the GW distance allows for comparing distributions based on their relative geometries, making it invariant to translations, reflections, and rotations.

\item \label{motivation:II} The invariance properties of the GW distance can indeed be useful in the context of controlling the shape of given formation, where, for example, for an ensemble of homogeneous agents (for example, a multi-robotic swarm), the goal may be to create a specific shape, such as a circle, line, or more complex configuration, without concern for the precise orientation or placement of the formation. For example, consider a scenario where a group of drones is required to spread out into a circular formation for optimal coverage in a search-and-rescue mission. Using conventional control techniques, each drone would need to follow precise, fixed coordinates or inter-agent distances, which can be cumbersome and inflexible if the environment changes or if the entire formation needs to rotate or translate. In the existing literature on formation control of ensemble systems such as \cite{ref:FORMCTRL:Alejandro:2017, ref:FORMCTRL:Chen:2017}, only configurations with fixed orientations have been considered, while the optimization over distance-preserving transformations, such as rotation and translation, has not been addressed. On the other hand, in our formulation, incorporating GW distance as the objective function in formation control adds flexibility because the distance itself accounts only for the shape similarity between two distributions rather than for absolute positions and orientations. This means that agents can achieve the desired formation shape with minimal dependency on external coordinate frames or absolute positioning. This can be viewed as a simultaneous optimization of the dynamical steering and the configuration of agents. \\
\end{enumerate}



With this motivation, we introduce the GW distance in the context of the multi-agent formation control problem of linear discrete-time dynamical systems from a constrained optimal control perspective. Our approach treats the formation control problem as a \emph{controlled optimal transport} problem which offers us a systematic way to compare and align multi-agent configurations by utilizing the inherent geometric structure of the agent formations. The GW-optimization problem (see \eqref{eq:disc_GW_dist} ahead) is nonconvex and presents significant computational challenges. To address this, we employ a semidefinite programming (SDP) relaxation of the GW-distance, developed in \cite{ref:JC:BTN:SHK:YSS:SDP-relaxation:2024}, which enables an accurate and computationally tractable approach for solving the optimal transport problem. Furthermore, under certain conditions on the solutions of the SDP, we can achieve \emph{globally optimal solutions} to the original GW problem \eqref{eq:disc_GW_dist} unlike conditional gradient based methods \cite{ref:GWML:ConditionedGD} that searches for locally optimal solutions or entropy regularization-based techniques \cite{ref:GWML:Entropic_GW} which does not solve the original problem and instead solve an approximate problem, minimizing a different regularized objective. 

\par The remainder of the paper is organized as follows: \S\ref{sec:GW_description} introduces the concept of GW distance, \S\ref{sec:problem_formulation} formulates the problem as a minimum-energy optimal control problem. In Section \S\ref{sec:main_results}, we present our primary findings, including the SDP relaxation-based algorithm. We also provide several results that ensure the existence of optimizers for specific optimal control problems. We demonstrate our results through a numerical example in \S\ref{sec:numerics}. Our findings and results suggest that there is ample scope for improvement, both theoretically and numerically, with several potential directions for future research. These possibilities are discussed in detail in \S\ref{sec:conclusion}.

\section{Background on the Gromov-Wasserstein distance}\label{sec:GW_description}
We fix some notations first. For us \(\mathbb{N}\) is the set of natural numbers and \(\N \Let \mathbb{N}\setminus \{0\} = \aset{1, 2, \ldots}\) denote the set of positive integers. For any \(n \in \N\), we define the finite set of positive integers up to \(n\) by \(\intbra{n} \Let \aset{1, \ldots, n}\). Given \(d \in \N\), let \(\Rbb^d_{\succ 0}\) and \(\Rbb^d_{\succeq 0}\) denote the sets of positive and nonnegative vectors in the Euclidean space \(\Rbb^d\), respectively. For any \(x \in \Rbb^{d}\) and for any positive definite matrix \(M \in \Rbb^{d \times d}\), we denote the standard quadratic form \(\inprod{x}{Mx}\) by \(\norm{x}^2_{M}\); when \(M\) is not necessarily positive (semi) definite we employ \(\inprod{x}{Mx} \Let x^{\top}Mx\). Let \(d_1,d_2 \in \N\) and consider the vector space of all \(d_1 \times d_2\) matrices with real entries; we denote a inner product of such matrices by \((A_1,A_2) \mapsto \inprod{A_1}{A_2} \Let \text{tr}(A_2^{\top}A_1)\). We denote the standard probability simplex by \(\simplex{d} := \aset{\zeta \in \Rbb^d_{\succeq 0} \mid \sum_{i=1}^d \zeta_i = 1}\), and \(\bm{1}_N \in \Rbb^N\) represents the vector with all entries equal to \(1\). The standard Euclidean basis vector whose \(i\)-th entry is \(1\) is denoted by \(e_i\).

Let \(\measX(\cdot)\) and \(\measY(\cdot)\) be two given probability density functions specified on metric spaces \(\metspaceX\) and \(\metspaceY\) with respective metrics \(\costX: \metspaceX \times \metspaceX \lra \Rbb\) and \(\costY:\metspaceY \times \metspaceY \lra \Rbb\). Define the set of probability measures with marginals \(\measX(\cdot)\) and \(\measY(\cdot)\) by \(\probmeas(\measX,\measY) \Let \aset[]{\marginal(x,y) \suchthat \int_{\metspaceY} \marginal(x,y)\,\dd y = \measX(x),\, \int_{\metspaceX}\marginal(x,y)\,\dd x = \measY(y)}\). Then for \(p\ge 1\), the GW distance, \(\GW_{p}\) is defined by:
\begin{align}\label{eq:cont_GW_dist}
   \GW_{p} \bigl( \measX, \measY\bigr)^{p} \Let \inf_{\marginal \in \probmeas(\measX,\measY)} \int_{\metspaceX^2 \times \metspaceY^2} \abs{\costX(x,x') - \costY(y,y')}^{p} \dd\marginal(x,y)\dd\marginal(x',y').
\end{align}
We refer the readers to \cite{ref:FM:GW:2011} for an in-depth analysis of the geometric and metric properties of \(\GW(\cdot,\cdot)\). For computational purposes, we focus on the setting of \emph{discrete} metric-measure spaces. To this end, fix natural numbers \(n,m \ge 1\), and let \(\sourcevec \Let (\sourcevec_1,\ldots,\sourcevec_n)^{\top} \in \simplex{n}\) and \(\tarvec\Let (\tarvec_1,\ldots,\tarvec_m)^{\top} \in \simplex{m}\). On \((\metspaceX,\costX)\) and \((\metspaceY,\costY)\) define the discrete probability measures \(\measX \Let \sum_{i=1}^n \sourcevec_i \delta_{x_i}\) and \(\measY \Let \sum_{j=1}^{m} \tarvec_j \delta_{y_j}\). Given two non-negative cost matrices \(\costmatX \in \Rbb^{n \times n}\) and \(\costmatY \in \Rbb^{m\times m}\) associated with the discrete metric-measure spaces \((\costmatX,\sourcevec)\) and \((\costmatY,\tarvec)\), define the set of couplings by \(\probmeas(\sourcevec,\tarvec)\Let \aset[]{\coupmat = [\coupmat_{ij}] \in \Rbb^{n \times m} \suchthat \coupmat \textbf{1}_m = \sourcevec,\, \coupmat^{\top}\textbf{1}_n = \tarvec}.\) 

Consider a smooth \emph{loss} function \(\Rbb \times \Rbb \ni (\xi,\mu)\mapsto \dummysmallfunc(\xi,\mu)\), where \(\dummysmallfunc\bigl(\costmatX_{i,j},\costmatY_{k,l}\bigr)\) measures the difference between the metrics \(\costX(x^{i},x^{i'})\) and \(\costY(y^{j},y^{j'})\) for \(1\le i,i'\le n\) and \(1 \le j,j' \le m\). The values of \(\dummysmallfunc(\cdot,\cdot)\) can be put in a tensor \(\dummybigfunc \in \Rbb^{mn \times mn}\) where \(\dummybigfunc \Let [\dummybigfunc_{iji'j'}] = \dummysmallfunc \bigl(\costmatX_{i,i'},\costmatY_{j,j'}\bigr)\). With these ingredients, the discrete analogue of \eqref{eq:cont_GW_dist} is given by
\begin{align}\label{eq:disc_GW_dist}
\hspace{-3mm}\GW\bigl(\costmatX,\costmatY,\sourcevec,\tarvec\bigr) &\Let \inf_{\coupmat \in \probmeas(\sourcevec,\tarvec)} \dummysmallfunc \bigl( \costmatX_{i,i'},\costmatY_{j,j'}\bigr)\coupmat_{i,j}\coupmat_{i',j'} = \inf_{\coupmat \in \probmeas(\sourcevec,\tarvec)} \sum_{i,j,i',j'}\dummybigfunc_{iji'j'}\coupmat_{i,j}\coupmat_{i',j'}. 
\end{align}
Defining the vectorization of \(\coupmat\) by \(\vectr(\coupmat) \Let [\coupmat_{11},\coupmat_{21},\ldots,\coupmat_{m1},\ldots,\coupmat_{mn}]^{\top} \in \Rbb^{mn}\) we rewrite \eqref{eq:disc_GW_dist} as
\begin{align}\label{eq:disc_GW_dist_vec}
\GW\bigl(\costmatX,\costmatY,\sourcevec,\tarvec\bigr) = \inf_{\coupmat \in \probmeas(\sourcevec,\tarvec)} \inprod{\vectr(P)}{G\vectr(P)},
\end{align}
which is a nonconvex NP-hard optimization problem and computationally intractable in general. We note that if the distributions are assumed to be Gaussian, the expression in \eqref{eq:cont_GW_dist} admits a closed-form solution; see \cite{ref:GaussianGroWass} and \cite{ref:KM:KK:MinDenLin:2024} for an application. While approximation-driven methods \cite{ref:MR:JK:JK:QAPGW-Neurips:2023} and entropic regularization techniques \cite{ref:GWML:Entropic_GW} offer computationally feasible ways to solve \eqref{eq:disc_GW_dist_vec}, they either depend on loss-specific algorithms or lack a certificate of global optimality. In the following section, we adopt an SDP-relaxation approach introduced in \cite{ref:JC:BTN:SHK:YSS:SDP-relaxation:2024} (which provides a certificate of global optimality, contingent on satisfying a specific condition; see \S\ref{sec:problem_formulation}, Remark \ref{rem:on_SDP_relaxation}, Equation \eqref{eq:approx_ration_condition}) to numerically solve \eqref{eq:disc_GW_dist_vec}.
\section{Problem formulation}\label{sec:problem_formulation}
Fix \(N \in \N\) and consider an \(N\)-agent discrete-time control system given by the recursion
\begin{equation}
    \label{eq:system}
    x^{i}_{t+1} = Ax^{i}_t+Bu^{i}_t,\quad \st^i_0=\xz^i\text{ given},\,t \in \Nz,\text{ and }i \in \intbra{N},
\end{equation}
along with the following data:
\begin{enumerate}[label=\textup{(\ref{eq:system}-\alph*)}, leftmargin=*, widest=b, align=left]
\item \label{eq:data:I:system-st-con-dist} for each \(i \in \intbra{N}\), the state and control input vectors at time \(t\) are denoted by \(x_t^i\) and \(u_t^i\), respectively. Moreover, for each \(i \in \intbra{N}\) and \(t \in \aset{0, \ldots, \horizon - 1}\), we have \(x_t^i \in \Mbb \subset \Rbb^{\sysDim}\) and \(u_t^i \in \Ubb \subset \Rbb^{\contDim}\), where \(\Mbb\) and \(\Ubb\) are compact and convex sets with \(0 \in \Rbb^{\sysDim}\) and \(0 \in \Rbb^{\contDim}\) in their respective interiors.
\item \label{eq:data:II:system-final-st} At the final time \(\horizon\), we impose \(\st_{\horizon} \in \admfinst \subset \Rbb^{\sysDim}\), where \(\admfinst\) is a compact and convex set containing \(0 \in \Rbb^{\sysDim}\) in its interior. 
\item  \label{eq:data:III:system-dynamics}  \(A \in \Rbb^{d \times d}\) and \(B \in \Rbb^{d \times m}\) are respectively the system and the control matrices which are, for simplicity, assumed to be the same for all the agents, and the \((A,B)\) pair is stabilizable. 
\end{enumerate}
\vspace{2mm}
For a fixed number of agents \(N \in \N\) and a fixed horizon \(\horizon \in \N\) we consider steering the initial point clouds \(\xz^i\in \Rbb^{\sysDim}\) to the same shape as the target point group \(\xtarg^i \in  \Rbb^{\sysDim}\), for each \(i \in \intbra{N}\), in an energy-efficient manner. Define the set of initial and final states for the agents by \(\xz \Let (\xz^1,\ldots,\xz^N)\) and \(x_d \Let (x_{T}^1,\ldots,x_{T}^N)\), and let \(\xtarg\Let (\xtarg^1,\ldots,\xtarg^N)\). Consider the \(N\)-agent, \(T\)-horizon, minimum energy optimal control problem (OCP): 
\begin{equation}
	\label{eq:finite_horizon_OCP}
	\begin{aligned}
		& \inf_{x_d} \inf_{(u^i_t)_{t=0}^{\horizon-1}}  && \sum_{i=0}^{N} \sum_{t=0}^{\horizon-1} \norm{u_t^i}^2_R + \GW\bigl(x_{d},\xtarg\bigr)\\
		& \quad \quad\sbjto && \begin{cases}
			x_{t+1}^{i} = Ax_t^{i} + B u_t^{i}\quad\text{ for each }t=0,\ldots,\horizon-1,\,\text{and }i \in \intbra{N}, \\
            x_0^i = \xz^i, \text{ and } x^i_T =x^i_{d},\,x_{\horizon}^i \in \admfinst, \\
            x_t^i \in \Mbb,\,u_t^i \in \Ubb \quad\text{ for each }t=0,\ldots,\horizon-1,\,\text{and }i \in \intbra{N},
		\end{cases}
	\end{aligned}
\end{equation}
where \(R \in \Rbb^{\contDim}_{\succ 0}\) is the control weighting matrix, and for compact notation in our context, we denote the GW distance in \eqref{eq:disc_GW_dist_vec} by \(\GW\bigl(x_{d},\xtarg\bigr)\). The OCP \eqref{eq:finite_horizon_OCP} can be compactly written as:
\begin{equation}\label{eq:optimal_navigator}
	\begin{aligned}
		& \inf_{x_{d}}  \rcost(\xz,x_{d}) + \GW\lr{x_{d},\xtarg},
	\end{aligned}
\end{equation}
where quadratic control cost is given by
\begin{equation} \label{eq:finite_horizon_OC}
	\begin{aligned}
	\rcost(\xz,x_d) \Let	 &\inf_{(u^i_t)_{t=0}^{\horizon-1}}  && \sum_{i=0}^{N} \sum_{t=0}^{\horizon-1} \norm{u_t^i}^2_R \\
	&	\,\,\,\, \sbjto &&\begin{cases}
  x_{t+1}^{i} = Ax_t^{i} + B u_t^{i}\quad\text{ for each }t=0,\ldots,\horizon-1,\,\text{and }i \in \intbra{N},\\
            x_0^i = \xz^i, \text{ and } x^i_T =x^i_{d},\,x_{\horizon}^i \in \admfinst, \\
            x_t^i \in \Mbb,\,u_t^i \in \Ubb \quad\text{ for each }t=0,\ldots,\horizon-1,\,\text{and }i \in \intbra{N},
			\end{cases}
	\end{aligned}
\end{equation}
and for a fixed \(x_d\), the GW-cost is given by (recall the notation established in \S\ref{sec:GW_description})
\begin{align}\label{eq:GW_OCP_formulation}
\GW\bigl(x_{d},\xtarg\bigr) &\Let \inf_{\coupmat \in \probmeas\bigl(\tfrac{\bm{1}_{N}}{N},\tfrac{\bm{1}_{N}}{N}\bigr)} \dummysmallfunc \bigl(\costmatX_{i,i'},\costmatY_{j,j'}\bigr)\coupmat_{i,j}\coupmat_{i',j'} \nn \\ & = \inf_{\coupmat \in \probmeas\bigl(\tfrac{\bm{1}_{N}}{N},\tfrac{\bm{1}_{N}}{N}\bigr)} \inprod{\vectr(\coupmat)}{\dummybigfunc(x_{d},\xtarg)\vectr(\coupmat)},
\end{align}
where \(\dummysmallfunc(\cdot,\cdot)\) is a loss function that accounts for possible discrepancies between \(\costmatX_{i,i'} \Let \bigl(\costX(x_d^i,x_d^{i'})\bigr)_{i,i'} \), and \(\costmatY_{j,j'} \Let \bigl(\costY(\xtarg^j,\xtarg^{j'})\bigr)_{j,j'}\) and \(\dummybigfunc(x_{d},\xtarg) = \dummysmallfunc \bigl(\costmatX_{i,i'},\costmatY_{j,j'}\bigr)\) captures the \(N^2 \times N^2\) distance matrix as per the notation established in \S\ref{sec:GW_description}. Note that, for us \(\sourcevec = \tarvec = \frac{\bm{1}_{N}}{N}\) and thus \(\coupmat\) belongs to the set of all couplings \(\probmeas\bigl(\frac{\bm{1}_{N}}{N},\frac{\bm{1}_{N}}{N}\bigr)\). In the sequel, we will denote the optimizer of \eqref{eq:GW_OCP_formulation} by \(\optmat\). The optimal control problem (OCP) in \eqref{eq:optimal_navigator} can be understood as determining the destination for each agent such that the states are optimally guided to resemble the target shape \( x_\mathrm{target} \) with minimal control cost. 
\begin{remark}\label{rem:QAP_problem}
The OCP \eqref{eq:finite_horizon_OC} is a standard constrained quadratic program which, for a fixed \(\xz\) and \(x_d\), under the problem data \eqref{eq:data:I:system-st-con-dist}--\eqref{eq:data:III:system-dynamics}, admits a unique solution. When both \(\metspaceX\) and \(\metspaceY\) are \(d\)-dimensional Euclidean spaces, \((\xi,\mu)\mapsto g(\xi,\mu) \Let \frac{1}{2}\abs{\xi-\mu}^2\), and \(\costX(\cdot,\cdot)\) and \(\costY(\cdot,\cdot)\) are quadratic Euclidean functions, \(\dummybigfunc\) in \eqref{eq:GW_OCP_formulation} reduces to 
\begin{equation}
 \dummybigfunc(x_{d},\xtarg)_{ii'jj'} \Let \frac{1}{2}\abs{{\costX(x_d^{i},x_d^{i'})}-\costY(\xtarg^{j},\xtarg^{j'})}^2,
\end{equation}
and the corresponding optimization problem is known as a \emph{quadratic assignment} problem \cite{ref:MR:JK:JK:QAPGW-Neurips:2023}, which is still an NP-hard problem. Our formulation, based on the SDP-relaxation technique established in \cite{ref:JC:BTN:SHK:YSS:SDP-relaxation:2024} does not depend on a specific choice of \(g(\cdot,\cdot)\).
\end{remark}

\section{Main results}\label{sec:main_results}

The steering problem \eqref{eq:GW_OCP_formulation} is inherently NP-hard and challenging to solve numerically. To address this, we build on the approach in \cite{ref:JC:BTN:SHK:YSS:SDP-relaxation:2024} to develop a tight SDP relaxation for \eqref{eq:GW_OCP_formulation}, enabling globally optimal solutions in particular cases. 
\subsection{SDP relaxation of \eqref{eq:GW_OCP_formulation}}
Recall that for a given \(x_d\), the GW-problem is given by
\begin{align}
\GW\bigl(x_{d},\xtarg\bigr) \Let  \inf_{\coupmat \in \probmeas\left(\frac{\bm{1}_{N}}{N},\frac{\bm{1}_{N}}{N}\right)} \inprod{\vectr(\coupmat)}{\dummybigfunc(x_{d},\xtarg)\vectr(\coupmat)} \nn
\end{align}
where \(\dummybigfunc (x_d,\xtarg) \Let \dummysmallfunc\bigl( \costX(x_d^i,x_d^{i'}), \costY(\xtarg^j,\xtarg^{j'})\bigr)\) for each \(i, i', j, j' \in \intbra{N}\). By representing the quadratic term with a rank-one positive semi-definite matrix and reformulating the linear marginal equalities in vectorized form, we derive a relaxed SDP of the following structure:
\begin{equation}
	\label{eq:SDP_relaxation}
	\begin{aligned}
		&  \inf_{(\coupmat,\widehat{\coupmat}) \in \Rbb^{N \times N} \times \Rbb^{N^2 \times N^2} }  && \inprod{\dummybigfunc\bigl(x_{d},\xtarg\bigr)}{\widehat{\coupmat}}\\
		& \qquad\quad \,\,\sbjto && \begin{cases}
		\begin{pmatrix} \widehat{\coupmat} & \vectr(\coupmat) \\ \vectr(\coupmat)^{\top} & 1 \end{pmatrix} \succeq 0,\,\widehat{\coupmat} \succeq 0 \\
    \coupmat \in \probmeas \left( \frac{1}{N}\bm{1}_{N}, \frac{1}{N}\bm{1}_{N} \right), \\ 
    \widehat{\coupmat}\, \vectr(e_i \bm{1}^{\top}_N) = \frac{1}{N} \,\vectr(\coupmat), \\
    \widehat{\coupmat}\, \vectr(\bm{1}_{N} e_j^{\top}) = \frac{1}{N} \,\vectr(\coupmat)
		\end{cases}
	\end{aligned}
\end{equation}
One can interpret the relaxation \eqref{eq:SDP_relaxation} as a Lagrangian dual formulation of the GW problem \eqref{eq:GW_OCP_formulation}, where additional constraints are introduced to connect the linear and quadratic components of transportation plans. We refer the readers to \cite[\S 3]{ref:JC:BTN:SHK:YSS:SDP-relaxation:2024} for more details on the relaxation procedure and \cite[Appendix B]{ref:JC:BTN:SHK:YSS:SDP-relaxation:2024} for duality results.
\begin{proposition}\label{prop:SDP_existence}
 Consider the GW problem \eqref{eq:GW_OCP_formulation} and its SDP relaxation \eqref{eq:SDP_relaxation}. Then \eqref{eq:SDP_relaxation} admits a global optimal point over its feasible set.
\end{proposition}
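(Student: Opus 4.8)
The plan is to invoke the Weierstrass extreme value theorem: I will show that the feasible set of \eqref{eq:SDP_relaxation}, call it $\mathcal{F}$, is nonempty and compact, and that the objective $(\coupmat,\widehat{\coupmat})\mapsto\inprod{\dummybigfunc(x_{d},\xtarg)}{\widehat{\coupmat}}$ is continuous on $\mathcal{F}$; since this objective is linear, continuity is immediate, so the argument reduces to verifying nonemptiness and compactness of $\mathcal{F}$.

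For nonemptiness I would exhibit the rank-one lift of the uniform coupling. Put $\coupmat^{\circ}\Let\tfrac{1}{N^2}\bm{1}_N\bm{1}_N^{\top}$, which lies in $\probmeas(\tfrac{1}{N}\bm{1}_N,\tfrac{1}{N}\bm{1}_N)$, and $\widehat{\coupmat}^{\circ}\Let\vectr(\coupmat^{\circ})\vectr(\coupmat^{\circ})^{\top}$. Then the lifted block matrix in \eqref{eq:SDP_relaxation} is the outer product of $\bigl(\vectr(\coupmat^{\circ})^{\top},\,1\bigr)^{\top}$ with itself and is therefore positive semidefinite; $\widehat{\coupmat}^{\circ}$ has nonnegative entries as the outer product of a nonnegative vector; and the two linking equalities hold because $\vectr(\coupmat^{\circ})^{\top}\vectr(e_i\bm{1}_N^{\top})=\sum_{\ell}\coupmat^{\circ}_{i\ell}=\tfrac{1}{N}$, and symmetrically $\vectr(\coupmat^{\circ})^{\top}\vectr(\bm{1}_N e_j^{\top})=\tfrac{1}{N}$. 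Hence $(\coupmat^{\circ},\widehat{\coupmat}^{\circ})\in\mathcal{F}$.

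For compactness, closedness is routine: $\mathcal{F}$ is a finite intersection of the preimage of the (closed) positive semidefinite cone under an affine map, the closed transportation polytope $\probmeas(\tfrac{1}{N}\bm{1}_N,\tfrac{1}{N}\bm{1}_N)$, the nonnegative orthant for the entries of $\widehat{\coupmat}$, and the affine subspaces cut out by the linking equalities. Boundedness is the crux. The transportation constraint forces $\coupmat_{ij}\in[0,\tfrac{1}{N}]$. For the lifted variable I would aggregate the constraint $\widehat{\coupmat}\,\vectr(e_i\bm{1}_N^{\top})=\tfrac{1}{N}\vectr(\coupmat)$ over $i\in\intbra{N}$: since $\sum_{i}\vectr(e_i\bm{1}_N^{\top})=\bm{1}_{N^2}$ this gives $\widehat{\coupmat}\bm{1}_{N^2}=\vectr(\coupmat)$, whence $\bm{1}_{N^2}^{\top}\widehat{\coupmat}\bm{1}_{N^2}=\bm{1}_{N^2}^{\top}\vectr(\coupmat)=\bm{1}_N^{\top}\coupmat\bm{1}_N=1$; combined with the entrywise nonnegativity of $\widehat{\coupmat}$ this confines every entry of $\widehat{\coupmat}$ to $[0,1]$. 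Thus $\mathcal{F}$ is bounded, hence compact, and a global minimizer exists by Weierstrass.

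I expect the one real obstacle to be the boundedness of the lifted variable $\widehat{\coupmat}$: the semidefinite constraints alone do not bound it, since the block-PSD condition only ties $\widehat{\coupmat}$ to $\vectr(\coupmat)$ through a Schur complement and leaves $\widehat{\coupmat}$ free (up to positive semidefiniteness) along directions orthogonal to the span of the row/column indicators $\{\vectr(e_i\bm{1}_N^{\top}),\vectr(\bm{1}_N e_j^{\top})\}$. The argument therefore genuinely relies on the entrywise nonnegativity of $\widehat{\coupmat}$ together with the aggregated identity $\widehat{\coupmat}\bm{1}_{N^2}=\vectr(\coupmat)$; the remaining steps are bookkeeping.
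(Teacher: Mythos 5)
Your proof is correct and follows essentially the same route as the paper: both invoke Weierstrass after noting continuity of the linear objective and compactness of the feasible set, with boundedness of \(\widehat{\coupmat}\) extracted from the linking equalities (the paper bounds the trace and combines it with positive semidefiniteness, whereas you bound the entrywise sum via \(\widehat{\coupmat}\bm{1}_{N^2}=\vectr(\coupmat)\) and combine it with entrywise nonnegativity --- the same mechanism). Your explicit verification of nonemptiness through the rank-one lift of the uniform coupling is a detail the paper leaves implicit, and it is a worthwhile addition since Weierstrass requires a nonempty feasible set.
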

\begin{proof}
For a fixed \(x_d\), denote the set of all feasible pairs \((\coupmat,\widehat{\coupmat}) \in \Rbb^{N \times N} \times \Rbb^{N^2 \times N^2}\)for the SDP by
\[
    \feasSet \Let \left\{(\coupmat,\widehat{\coupmat})\;\middle\vert\;  
    \begin{array}{@{}l@{}}
       \begin{pmatrix} \widehat{\coupmat} & \vectr(\coupmat) \\ \vectr(\coupmat)^{\top} & 1 \end{pmatrix}\succeq 0,\,\coupmat \in \probmeas \left(\frac{1}{N}\bm{1}_{N}, \frac{1}{N}\bm{1}_{N} \right),\,\coupmat \succeq 0\\ 
    \widehat{\coupmat}\, \vectr(e_i \bm{1}^{\top}_N) = \frac{1}{N} \,\vectr(\coupmat),\,
    \widehat{\coupmat}\, \vectr(\bm{1}_{N} e_j^{\top}) = \frac{1}{N} \,\vectr(\coupmat) 
        \end{array}
        \right\}.    
\]
Then, the SDP can be compactly written as 
\begin{equation}
	\label{eq:SDP_compact}
	\begin{aligned}
		& \inf_{(\coupmat,\widehat{\coupmat}) \in \Rbb^{N \times N} \times \Rbb^{N^2 \times N^2} }  && \inprod{\dummybigfunc\bigl(x_{d},\xtarg\bigr)}{\widehat{\coupmat}}\\
		& \qquad\quad \sbjto && \hspace{2mm}(\coupmat,\widehat{\coupmat}) \in \feasSet
	\end{aligned}
\end{equation}
Observe that: 
\begin{itemize}[leftmargin=*]

\item The set \(\probmeas \left(\frac{1}{N}\bm{1}_{N}, \frac{1}{N}\bm{1}_{N} \right)\) forms a convex and compact polytope.

\item The set of matrices \(\widehat{\coupmat} \in \Rbb^{N^2 \times N^2}\) that satisfy the linear constraints \(\widehat{\coupmat}\, \vectr(e_i \bm{1}^{\top}_N) = \frac{1}{N} \,\vectr(\coupmat)\) and \(\widehat{\coupmat}\, \vectr(\bm{1}_{N} e_j^{\top}) = \frac{1}{N} \,\vectr(\coupmat)\) have a trace of at most one. Thus the feasible set \(\feasSet\) consists of positive semi-definite matrices with trace at most one which is closed and bounded and thus compact. Moreover \(\feasSet\) is convex due to the linearity of trace. 
\item Finally, the map \((\coupmat,\widehat{\coupmat}) \mapsto \inprod{\dummybigfunc\bigl(x_{d},\xtarg\bigr)}{\widehat{\coupmat}}\) is continuous. 
\end{itemize}
Thus, solutions to \eqref{eq:SDP_relaxation} exist by Weierstrass's Theorem \cite[Theorem 2.2]{ref:OG10}.
\end{proof}
\begin{remark}[Approximation ratio and strong duality]\label{rem:on_SDP_relaxation}
The SDP relaxation \eqref{eq:SDP_relaxation} of the original GW problem \eqref{eq:GW_OCP_formulation} provides a framework for deriving a certificate of optimality, which can confirm whether a solution to the relaxed problem is also optimal for the original problem. To this end, let \(\optmat \in \probmeas\bigl(\frac{\bm{1}_{N}}{N}, \frac{\bm{1}_{N}}{N}\bigr)\) denote an optimizer of the original GW problem \eqref{eq:GW_OCP_formulation}, and let \((\sdpoptmat, \sdpoptbigmat) \in \feasSet\) represent the optimizer of the relaxed problem \eqref{eq:SDP_relaxation}. By construction, \(\sdpoptmat \in \probmeas\bigl(\frac{\bm{1}_{N}}{N}, \frac{\bm{1}_{N}}{N}\bigr)\), indicating that \((\sdpoptmat, \sdpoptbigmat)\) is feasible for \eqref{eq:GW_OCP_formulation} as well. Consequently, we obtain the inequality \(\inprod{\sdpoptbigmat}{\dummybigfunc\bigl(x_{d},\xtarg\bigr)} \le \inprod{\vectr(\sdpoptmat)}{\dummybigfunc\bigl(x_{d},\xtarg\bigr) \vectr(\sdpoptmat)}\). This implies that if the ratio
\begin{align}\label{eq:approx_ration_condition}
\frac{\inprod{\vectr(\sdpoptmat)}{\dummybigfunc\bigl(x_{d},\xtarg\bigr)\vectr(\sdpoptmat)}}{\inprod{\sdpoptbigmat}{\dummybigfunc}} = 1,
\end{align}
then \(\sdpoptmat\) is a \emph{globally optimal} solution to \eqref{eq:GW_OCP_formulation} \cite{ref:JC:BTN:SHK:YSS:SDP-relaxation:2024}. In \S \ref{sec:numerics}, we will observe that in our numerical experiments, this ratio remains very close to one. Moreover, one can obtain \emph{exact solution} if the optimizers \((\sdpoptmat, \sdpoptbigmat)\) satisfies the rank condition: 
\begin{align}\label{eq:rank_condition}
   \mathrm{rank} \begin{pmatrix}
        \sdpoptbigmat & \vectr(\sdpoptmat) \\ \vectr(\sdpoptmat)^{\top} & 1
    \end{pmatrix} =1 .
\end{align}
The condition \eqref{eq:rank_condition} guarantees that the strong duality holds, i.e., duality gap between the SDP \eqref{eq:SDP_relaxation} and its dual is zero; see \cite[Proposition B.3]{ref:JC:BTN:SHK:YSS:SDP-relaxation:2024}. We will also demonstrate in our numerical example that the condition \eqref{eq:rank_condition} holds. 
\end{remark}
Using the SDP relaxation \eqref{eq:SDP_relaxation}, we aim to solve the relaxed optimal control problem (OCP):
\begin{equation}\label{eq:SDP_relaxed_optimal_navigator}
	\begin{aligned}
		& \inf_{x_{d} \in \admfinst} \,\, J(x_d)\Let \rcost(\xz,x_{d}) + \GWSDP\lr{x_{d},\xtarg},
	\end{aligned}
\end{equation}
where \(\GWSDP\lr{x_{d},\xtarg}\) corresponds to the relaxation \eqref{eq:SDP_relaxation}.
\begin{theorem}
    Consider the OCP \eqref{eq:SDP_relaxed_optimal_navigator} with the problem data \eqref{eq:data:I:system-st-con-dist}--\eqref{eq:data:III:system-dynamics} and assume that it is strictly feasible. Then, \eqref{eq:SDP_relaxed_optimal_navigator} admits a globally optimal solution over its feasible set. 
\end{theorem}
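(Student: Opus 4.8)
The plan is to reduce the statement to the Weierstrass extreme value theorem, exactly as in the proof of Proposition \ref{prop:SDP_existence}: I will exhibit the objective $J(\cdot)$ as a lower semicontinuous function on a nonempty compact set. The first task is to pin down the \emph{effective} feasible set. Since $\rcost(\xz,x_d)$ is finite precisely when $x_d$ is reachable at time $\horizon$ from $\xz$ by an admissible control history respecting the constraints in \eqref{eq:data:I:system-st-con-dist}--\eqref{eq:data:II:system-final-st}, set $\mathcal{D} \Let \{x_d : x_d^i \in \admfinst \text{ for all } i,\ \rcost(\xz,x_d) < +\infty\}$; this is the true domain over which \eqref{eq:SDP_relaxed_optimal_navigator} is optimized. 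Strict feasibility gives $\mathcal{D} \neq \emptyset$, and boundedness is immediate from $\mathcal{D} \subseteq \admfinst^{N}$. For closedness I would take $x_d^{(k)} \to \bar x_d$ with $x_d^{(k)} \in \mathcal{D}$, lift each to an admissible control tuple, use compactness of $\Ubb$ (so the control tuples live in the compact set $\Ubb^{N\horizon}$) to extract a convergent subsequence, and pass to the limit in the linear recursion \eqref{eq:system}; the limiting trajectory still satisfies the closed state/terminal constraints and ends at $\bar x_d$, so $\bar x_d \in \mathcal{D}$ and $\mathcal{D}$ is compact.

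Next I would establish lower semicontinuity of $x_d \mapsto \rcost(\xz,x_d)$ on $\mathcal{D}$ by the same direct-method argument. Given $x_d^{(k)} \to \bar x_d$ in $\mathcal{D}$, choose a subsequence along which $\rcost(\xz,x_d^{(k)})$ converges to its $\liminf$; by Remark \ref{rem:QAP_problem} each inner QP \eqref{eq:finite_horizon_OC} is solved by a (unique) admissible control tuple $u^{(k)} \in \Ubb^{N\horizon}$, so a further subsequence converges, $u^{(k)} \to \bar u$, and $\bar u$ is admissible for $\bar x_d$ by continuity of \eqref{eq:system}. Since $u \mapsto \sum_{i,t}\norm{u_t^i}^2_R$ is continuous, $\rcost(\xz,\bar x_d) \le \sum_{i,t}\norm{\bar u_t^i}^2_R = \lim_k \sum_{i,t}\norm{u^{(k),i}_t}^2_R = \liminf_k \rcost(\xz,x_d^{(k)})$, which is the desired inequality. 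Note convexity of $\rcost(\xz,\cdot)$ alone would only yield continuity on the relative interior of $\mathcal{D}$, so the compactness of $\Ubb$ is doing the real work here.

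For the transport term I would exploit that $x_d$ enters the SDP \eqref{eq:SDP_relaxation} only through the cost matrix $\dummybigfunc(x_d,\xtarg)$ in the \emph{objective}, while the feasible set $\feasSet$ is independent of $x_d$ and compact (Proposition \ref{prop:SDP_existence}). Because $\costX(\cdot,\cdot)$, $\costY(\cdot,\cdot)$ and the loss $\dummysmallfunc(\cdot,\cdot)$ are smooth, $x_d \mapsto \dummybigfunc(x_d,\xtarg)$ is continuous, so $(x_d,\widehat{\coupmat}) \mapsto \inprod{\dummybigfunc(x_d,\xtarg)}{\widehat{\coupmat}}$ is jointly continuous; hence $\GWSDP(x_d,\xtarg) = \min_{(\coupmat,\widehat{\coupmat}) \in \feasSet} \inprod{\dummybigfunc(x_d,\xtarg)}{\widehat{\coupmat}}$ is continuous in $x_d$ (Berge's maximum theorem with a constant compact-valued correspondence; equivalently, it is upper semicontinuous as an infimum of continuous functions and lower semicontinuous by compactness of $\feasSet$). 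Therefore $J = \rcost(\xz,\cdot) + \GWSDP(\cdot,\xtarg)$ is lower semicontinuous on the nonempty compact set $\mathcal{D}$, and the Weierstrass theorem \cite[Theorem 2.2]{ref:OG10} yields a global minimizer of \eqref{eq:SDP_relaxed_optimal_navigator}.

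The main obstacle is the handling of $\rcost$ near the boundary of $\mathcal{D}$: one must simultaneously argue that $\mathcal{D}$ is closed and that $\rcost(\xz,\cdot)$ does not jump downward there, and the clean way to get both is the compactness-based limiting argument above rather than convex-analytic continuity (which can fail on $\partial\mathcal{D}$). A secondary point to fix carefully is the precise meaning of ``strictly feasible'' — it should at least guarantee $\mathcal{D} \neq \emptyset$ (and, if desired, a point of $\mathcal{D}$ reachable through the interiors of $\Mbb$, $\Ubb$, $\admfinst$), since the remainder of the argument is otherwise unconditional on the data \eqref{eq:data:I:system-st-con-dist}--\eqref{eq:data:III:system-dynamics}.
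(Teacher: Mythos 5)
Your proposal is correct and shares the paper's overall skeleton: show that both value functions $x_d \mapsto \rcost(\xz,x_d)$ and $x_d \mapsto \GWSDP(x_d,\xtarg)$ are lower semicontinuous on a nonempty compact set and invoke Weierstrass. Where you diverge is in how the key lower-semicontinuity step is justified. The paper argues via convexity of the two inner problems together with Slater's condition and cites a parametric-optimization lemma \cite[Lemma 5.7]{ref:still2018lectures} to conclude lower semicontinuity of the value functions, then applies the direct-method version of Weierstrass \cite[Box 1.3]{ref:santambrogio2023course} on $\admfinst$ (implicitly treating $\rcost(\xz,\cdot)$ as an extended-real-valued proper function there). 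You instead give a self-contained direct-method argument: you isolate the effective domain $\mathcal{D}$ where $\rcost$ is finite, prove its compactness and the lower semicontinuity of $\rcost$ by extracting convergent subsequences of admissible controls from the compact set $\Ubb^{N\horizon}$, and obtain continuity of $\GWSDP$ from the fact that $x_d$ enters the SDP only through a continuously varying objective over the fixed compact set $\feasSet$ (a Berge-type argument). Your route buys explicitness about the boundary behaviour of $\rcost$ on $\partial\mathcal{D}$ — a point the paper's one-line citation glosses over, and where, as you correctly note, convexity alone would not suffice for lower semicontinuity — at the cost of a longer argument; the paper's route is shorter but leans on an external lemma and is less careful about where $\rcost$ is actually finite. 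One small caveat applying to both arguments: continuity of $x_d \mapsto \dummybigfunc(x_d,\xtarg)$ requires $\costX$ to be continuous with respect to the Euclidean topology, which the paper assumes only implicitly.
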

\begin{proof}
Note that, \(x_d\) is constrained to the compact set \(\admfinst\) and observe that: 
\begin{itemize}[leftmargin=*]
    \item For each fixed \(x_d\), in the OCP \eqref{eq:finite_horizon_OC}, the map \(\mu \mapsto \inprod{\mu}{R \mu}\) is convex and the constraint sets \(\Mbb, \Ubb,\) and \(\admfinst\) are all convex sets, and the Slater condition hold. 
    \item For each fixed \(x_d\), in \eqref{eq:SDP_relaxation}, the set \(\feasSet\) is convex; see Proposition \ref{prop:SDP_existence}, and the function \(\widehat{\coupmat} \mapsto \inprod{\dummybigfunc(x_{d},\xtarg)}{\widehat{\coupmat}}\) is convex. Moreover, \eqref{eq:SDP_relaxation} is strictly feasible. 
\end{itemize}
Thus, the maps \(x_d \mapsto \rcost(\xz, x_d,)\) and \(x_d \mapsto \GWSDP(x_d,\xtarg)\) are lower semicontinuous \cite[Lemma 5.7]{ref:still2018lectures}. By a version of Weierstrass's Theorem, the existence of a solution follows \cite[Box 1.3, pp.9--10]{ref:santambrogio2023course}.\end{proof} 
\begin{remark}
The choice of an algorithm to solve \eqref{eq:SDP_relaxed_optimal_navigator} depends on the smoothness and convexity properties of the objective function. In the absence of additional assumptions on the problem data, \(J(\cdot)\) is generally nonsmooth and nonconvex with respect to the decision variable. Several algorithms are available for tackling such problems, including interior point methods \cite{ref:karmista_interior_nonsmooth, ref:schmidt2015interior}, proximal-like methods \cite{ref:grimmer_nonsmooth-nonconvex-opt}, and projective penalty methods \cite{ref:nonsmooth-projective-norkin2024exact}. In fact, with \(\admfinst\) convex, the OCP \eqref{eq:SDP_relaxed_optimal_navigator} admits the same form given in \cite[\S 3]{ref:nonsmooth-projective-norkin2024exact}, and the penalty method established therein is exact \cite[Theorem 3]{ref:nonsmooth-projective-norkin2024exact}, which can be utilized to solve \eqref{eq:SDP_relaxed_optimal_navigator}. These approaches ensure that the sequence of iterates converges to a stationary point, which may correspond to a KKT point or a Fritz-John point depending on the constraint qualification conditions. A comprehensive analysis of the structural and regularity properties of \(J(\cdot)\), along with the design of tailored algorithms to solve \eqref{eq:SDP_relaxed_optimal_navigator}, will be the subject of future work. 
\end{remark}

\section{Numerical experiments}\label{sec:numerics}
We consider a constrained formation shape control problem with \(10\) agents which follows the linear dynamics
\begin{align}\label{eq:num:dynamics}
    x_{t+1}^{i} = \begin{pmatrix}0.5 & 0.2 \\ 0.1 & 0.4 \end{pmatrix} x_t^{i} + \begin{pmatrix} 1 & 0 \\ 0 & 1
    \end{pmatrix} u_t^{i} \quad\text{for }i\in \intbra{10}\text{ and }t \in \N.
\end{align}
The constraint sets are given by \(\Mbb \Let \aset[]{\xi \suchthat \norm{\xi}_{\infty} \le 20}\), \(\Ubb \Let \aset[]{\mu \suchthat \norm{\mu}_{\infty} \le 20}\) and \(\admfinst \Let \aset[]{\eta \suchthat \norm{\eta}_{\infty} \le 20}\). Fix \(\horizon = 10\) and consider the OCP:
\begin{equation}
	\label{eq:num:finite_horizon_OCP}
	\begin{aligned}
		& \inf_{x_d} \inf_{(u_t)_{t=0}^{\horizon-1}}  && \sum_{i=0}^{N} \sum_{t=0}^{\horizon-1} \eps \norm{u_t^i}^2_{I_2} + \GW\bigl(x_{d},\xtarg\bigr)\\
		& \quad \sbjto && \begin{cases}
			\text{dynamics }\eqref{eq:num:dynamics},\,x^i_t \in \Mbb, \, u^i_t \in \Ubb\text{ for each }i \in \intbra{10},\,t \in \aset[]{0,\ldots,\horizon-1}, \\
            \text{and }x_d \in \admfinst
		\end{cases}
	\end{aligned}
\end{equation}
where \(I_2 \in \Rbb^{2 \times 2}\) is the identity matrix, and \(\eps>0\) is a parameter that we employ to strike a balance between the control cost and GW distance. In the GW cost, we employed \(\dummysmallfunc(x,y)\Let \frac{1}{2}\abs{x-y}^2\).  

As the agents' initial states, we randomly selected ten points from the set \( \{(x,y) \in \lcrc{-15}{-12} \times \lcrc{-2}{2}\}\) and fixed a target formation given in Figure \ref{fig:comparison} (see the left-hand subfigure in the first row) which our agents need to mimic at the final time. We employed Julia v1.11.1 \cite{ref:Julia:bezanson2017} and its \texttt{JuMP} library to perform all our numerical computations and optimization tasks.
\begin{wrapfigure}{r}{5.5cm}
\caption{Approximation ratio}\label{fig:approx_ratio}
\includegraphics[scale=0.24]{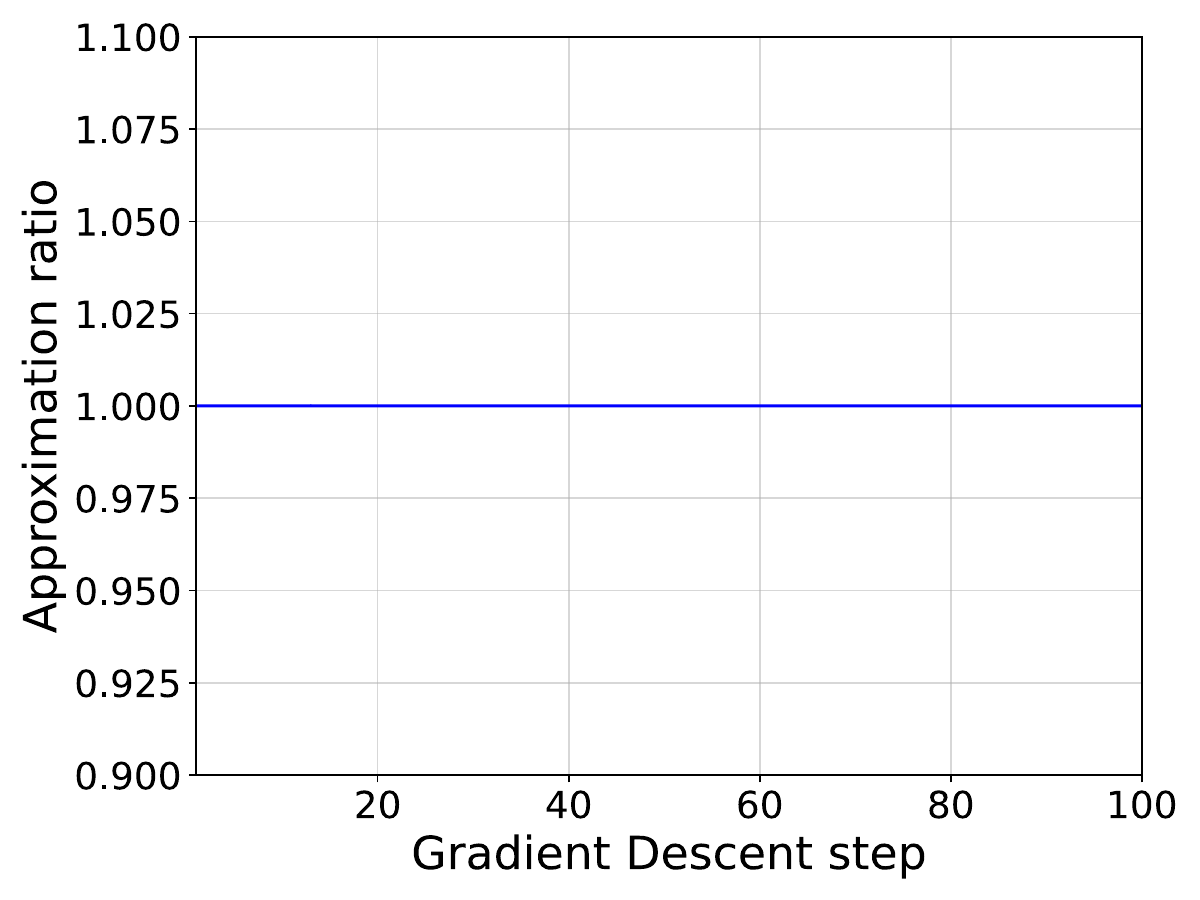}
\end{wrapfigure}
To convert the GW distance in \eqref{eq:GW_OCP_formulation} we performed the relaxation given in \eqref{eq:SDP_relaxation} and the ensuing SDP was solved using MOSEK \cite{ref:mosek}. To solve the intermediate OCP \eqref{eq:finite_horizon_OC}, which is a convex quadratic program, we employed MOSEK as well. Finally, we employed the interior-point solver IPOPT \cite{ref:IPOPTwachter2006implementation} to solve the problem \eqref{eq:SDP_relaxed_optimal_navigator}. The tolerance of MOSEK for solving both the problems \eqref{eq:finite_horizon_OC} and \eqref{eq:SDP_relaxation} was set at \(10^{-5}\); the tolerance for the interior-point algorithm was set at \(10^{-6}\). 

We calculated the approximation ratio for \(10\) different runs using the optimizers \((\sdpoptmat,\sdpoptbigmat)\) of the SDP to check the accuracy of the solution and we found that, in average
\(
\tfrac{\inprod{\vectr(\sdpoptmat)}{\dummybigfunc(x_{d},\xtarg)\vectr(\sdpoptmat)}}{\inprod{\sdpoptbigmat}{\dummybigfunc(x_{d},\xtarg)}} \approx 1,\nn
\)
which indicates that \(\sdpoptbigmat\) is a globally optimal solution to \eqref{eq:GW_OCP_formulation} and our method offers a high-quality approximation of the optimal transport solution for the GW distance; see Figure \ref{fig:approx_ratio}. Consequently, the final configuration of the agents nearly mirrors the target formation; see Figure \eqref{fig:comparison} (the right-hand subfigure in the first row, and the subfigure in the second row). Moreover, we observed that the optimizers \((\sdpoptmat, \sdpoptbigmat)\) satisfies the rank condition \eqref{eq:rank_condition} 
which indicates that strong duality holds for this example. As noted in \cite{ref:JC:BTN:SHK:YSS:SDP-relaxation:2024}, while the SDP is solvable in polynomial time, computational efficiency could be further improved, for example, using sparsification of the SDP solutions. We will explore this in our future investigations. 
\begin{figure}[ht]
\centering
\begin{minipage}{0.4\linewidth}
\includegraphics[scale=0.2]{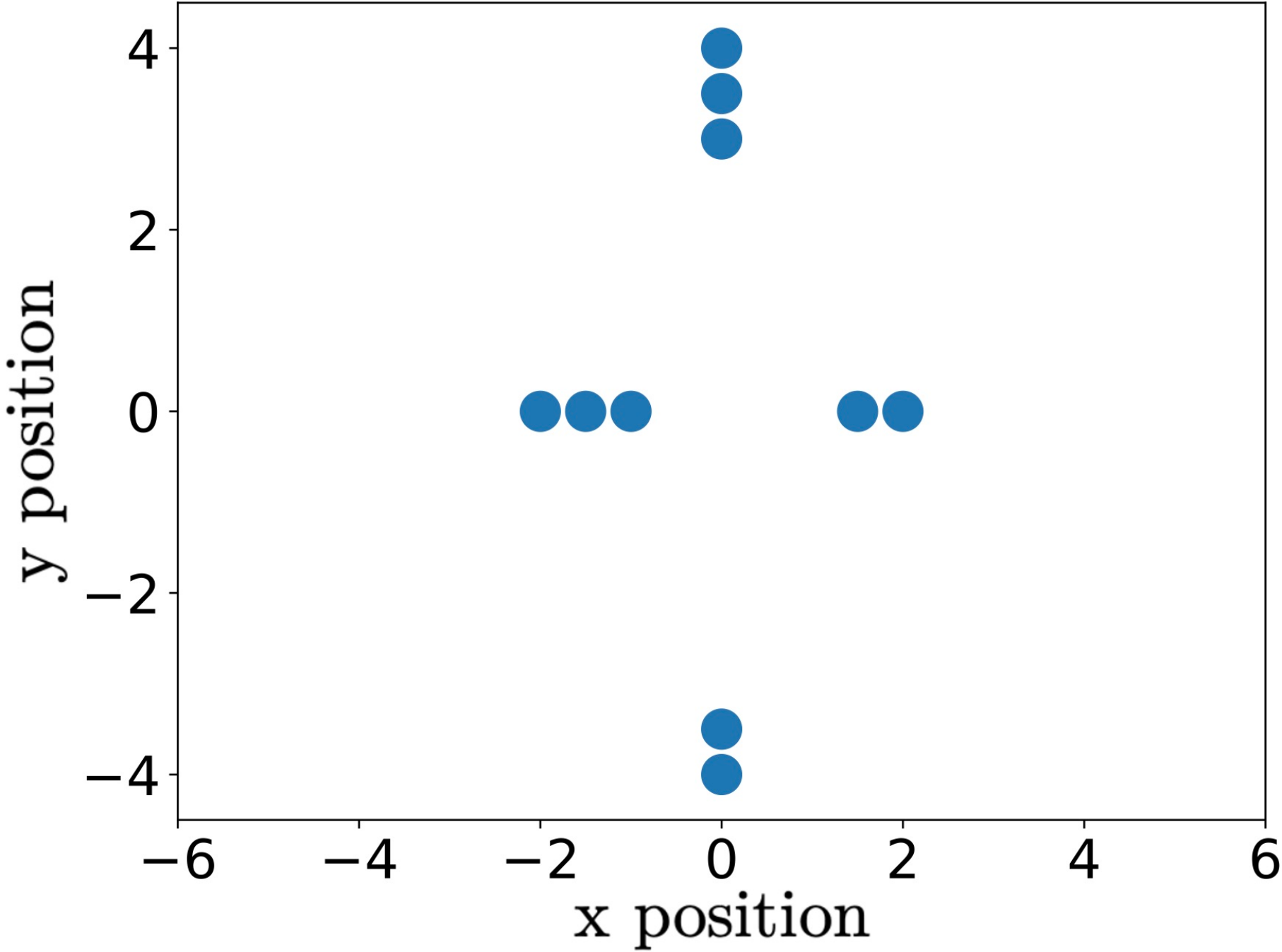}
\label{fig:target-label}
\end{minipage}%
\hspace{5mm}
\begin{minipage}{0.4\linewidth}
\includegraphics[scale=0.21]{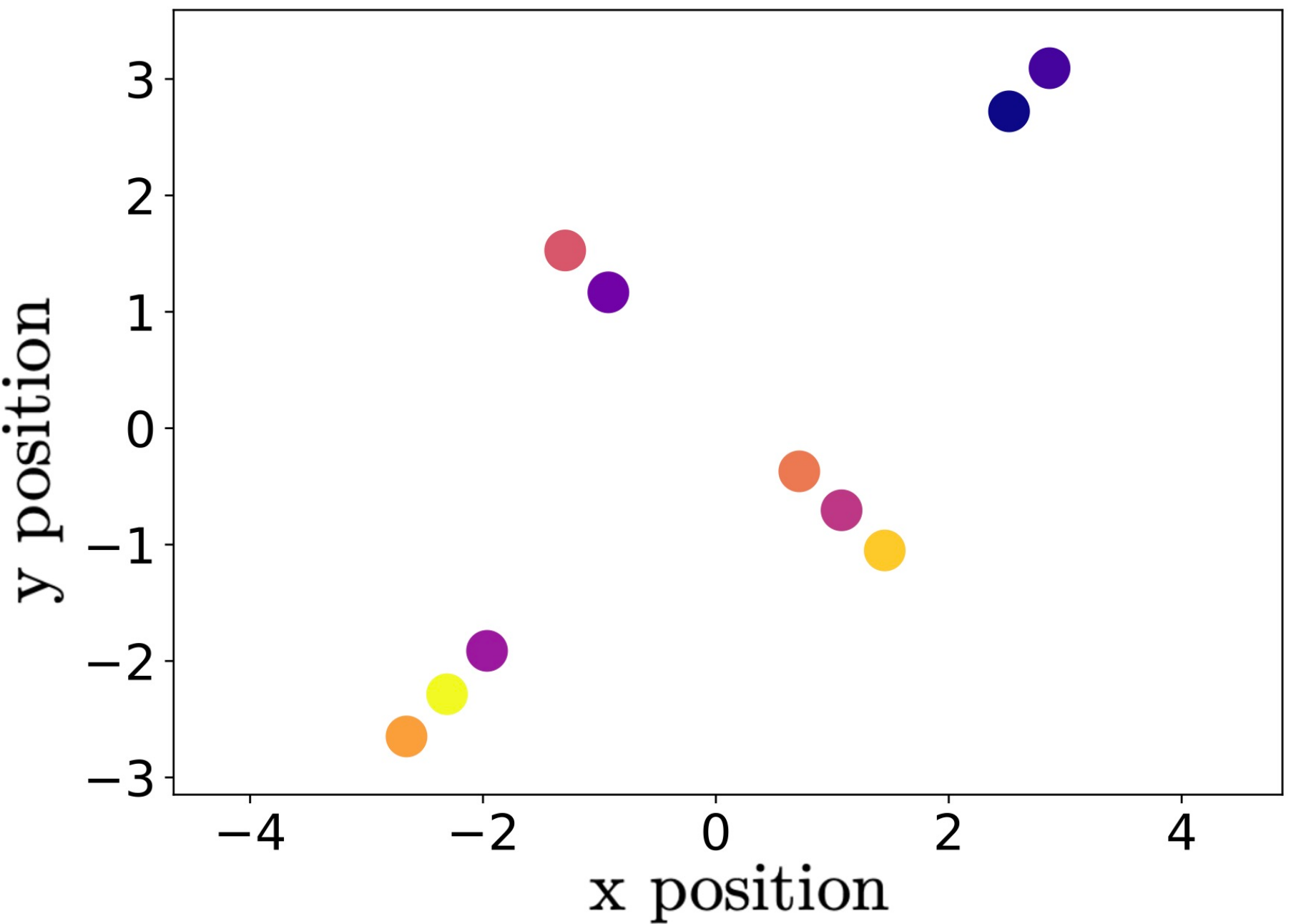}
\label{fig:terminal-label}
\end{minipage}
\vspace{-2mm}
\begin{minipage}{0.98\linewidth}
\centering
\includegraphics[scale=0.27]{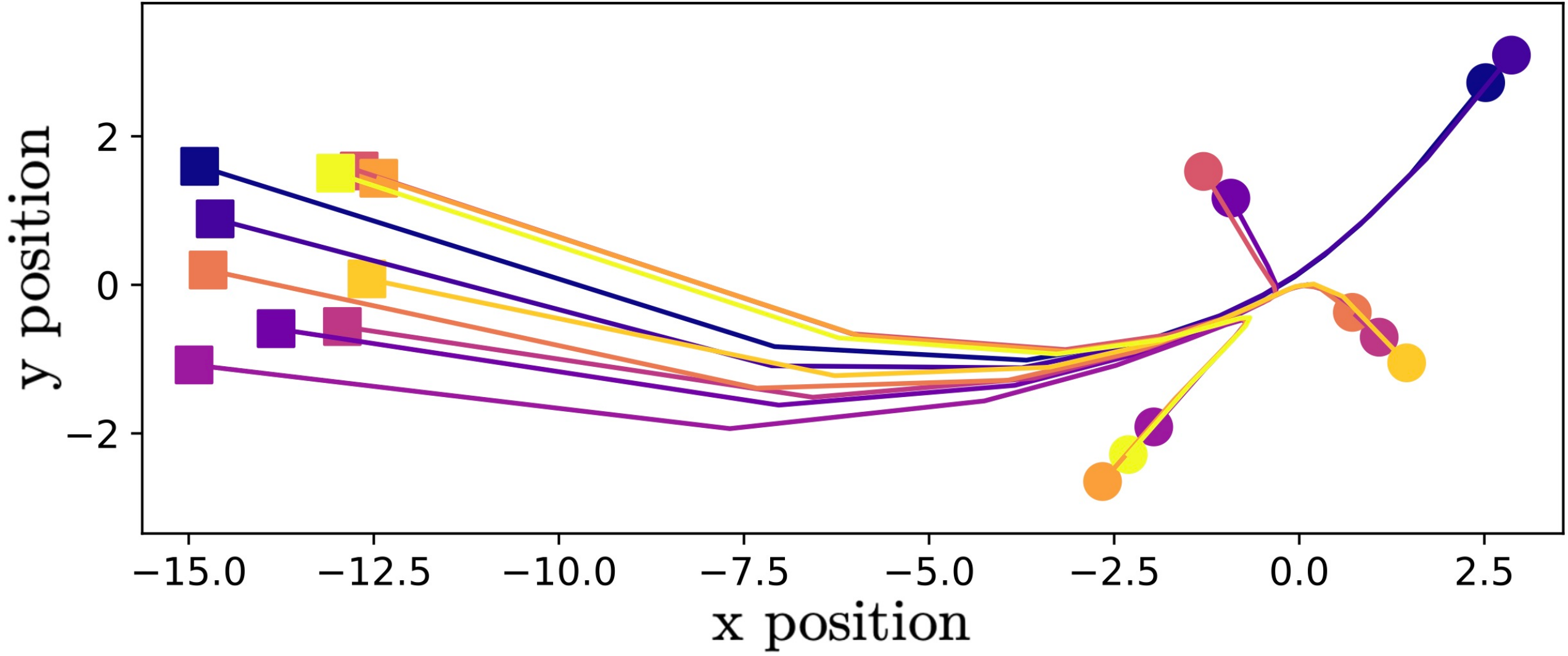}
\end{minipage}
\caption{In the top row, the left-hand and right-hand subfigures depict the target shape and the final shape, respectively. The bottom row depicts the randomly chosen initial points (squares) and their trajectories up to the final formation (circles), with \(\eps = 0.5\).}
\label{fig:comparison}
\end{figure}
We also record the control cost and the cost associated with solving the SDP problem in Table \eqref{tab:optimization_stats} for some \(\eps\) in \eqref{eq:num:finite_horizon_OCP}. Table \ref{tab:optimization_stats} highlights a trade-off between the overall control cost and the GW distance between the target and final formation. 

The previous numerical illustration presented a case where both \(\costX(\cdot,\cdot)\) and \(\costY(\cdot,\cdot)\) were standard Euclidean distances. However, the flexibility of the GW distance allows us to adopt different underlying metrics. For example, let us consider a scenario in which 6 agents are to be separated into 2 groups in the final state, each consisting of 3 agents (the numbers can be adjusted as long as they are meaningful). To this end, we fixed fully connected graph structure as given in Figure \ref{fig:graph_formation_2} (left-hand subfigure) where we assigned weights to all pairs of edges in the graph in the following manner:
\begin{align}
\label{eq:graph_distance}
d_{\mathrm{graph}}(i,i^{'}) \Let
\begin{cases} 
    2  &  \text{if } \{i,i'\} \subset \{1,2,3\} \,\,or\,\, \{i,i'\} \subset \{4,5,6\},\\
    4  & \text{otherwise.}  \\
\end{cases}
\end{align}
To achieve the desired grouping, we assign smaller weights to edges connecting nodes within the same group, and larger weights to edges that span across groups.  Note that \(d_{\mathrm{graph}}\) is a metric. With this premise, we considered the following GW distance:
\begin{align}
\label{eq:graph_GW}
\GW_{C}\bigl(x_{d}) \Let  \inf_{\coupmat \in \probmeas\left(\frac{\bm{1}_{N}}{N},\frac{\bm{1}_{N}}{N}\right)} \inprod{\vectr(\coupmat)}{\dummybigfunc_{C}(x_{d})\vectr(\coupmat},
\end{align}
where \(\dummybigfunc_{C} (x_d) \Let \dummysmallfunc\bigl( d_{\mathrm{graph}}(i,i^{'}), \costY(x_{d}^j,x_{d}^{j'})\bigr)\) for each \(i,i',j,j' \in \intbra{N}\); and \(g(\cdot,\cdot)\) is same as before. Finally, to achieve separation of agents, we consider the following problem. 
\begin{wraptable}{l}{60mm}
\begin{tblr}{l c c }
\hline[2pt]
 \SetRow{azure9}
        \(\eps\) & Control cost  & GW distance \\
        \hline
        0.1 & 59.82 & 0.02 \\
        \hline
        0.5 & 51.63 & 0.50 \\
        \hline
        1.0 & 48.12 & 2.51 \\
        \hline
        2.5 & 40.60 & 15.21 \\
 \hline[2pt]
\end{tblr}
\centering
\vspace{2.5mm}
\caption{Trade-off between control cost and GW distance between agents' state and target formation, with 10 agents. Observe that increasing \(\eps\) leads to a lower control cost; however, this comes at the expense of a higher GW cost, resulting in a misaligned formation.}
\label{tab:optimization_stats}
\end{wraptable}
\begin{equation}\label{eq:Problem_with_graph}
	\begin{aligned}
		& \inf_{x_{d} \in \admfinst} \,\, \rcost(\xz,x_{d}) + \GW_{C}\bigl(x_{d}\bigr),
	\end{aligned}
\end{equation}
where \( \rcost(\xz,x_{d})\) takes care of the control cost while \(\GW_{C}(x_{d})\) accounts for the separation of agents. Consequently, we employed the same SDP relaxation strategy given in \S\ref{sec:main_results} for \(\GW_{C}\) and solved \eqref{eq:Problem_with_graph} with exactly the same problem data and solver specifications. 
\begin{figure}[ht]
\centering
\begin{minipage}{0.45\linewidth}
\hspace{-10mm}
\includegraphics[scale=0.28]{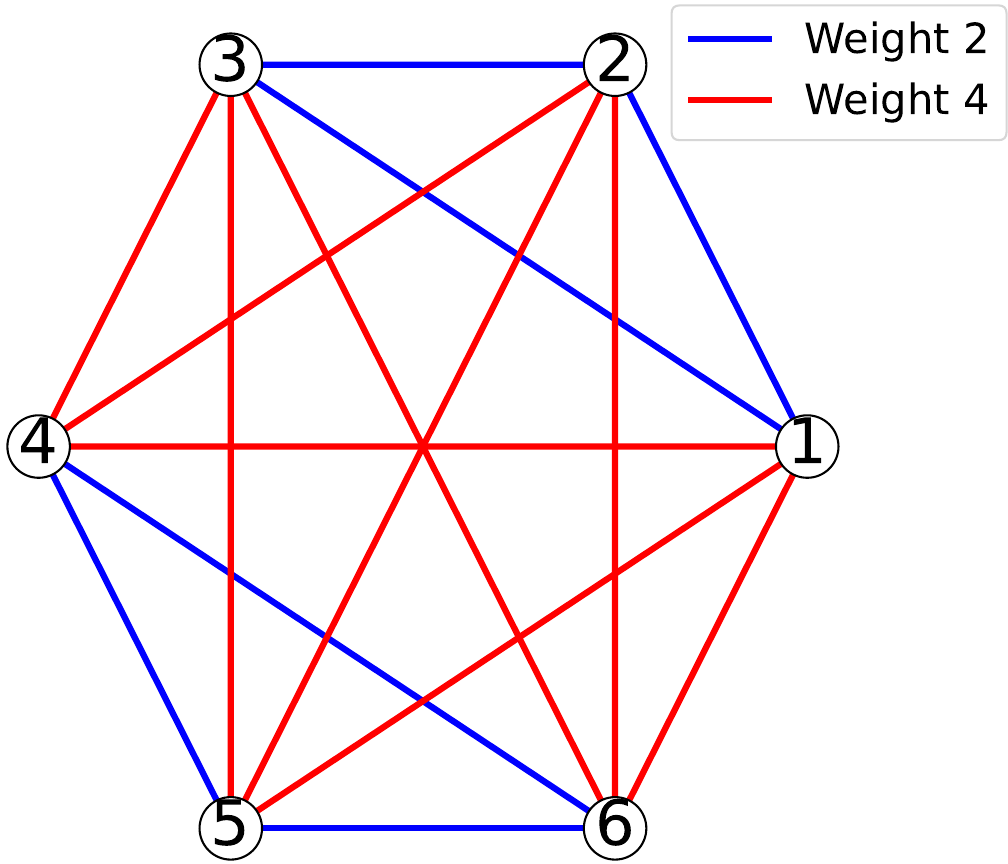}
\end{minipage}%
\begin{minipage}{0.45\linewidth}
 \hspace{-28mm}
\includegraphics[scale=0.24]{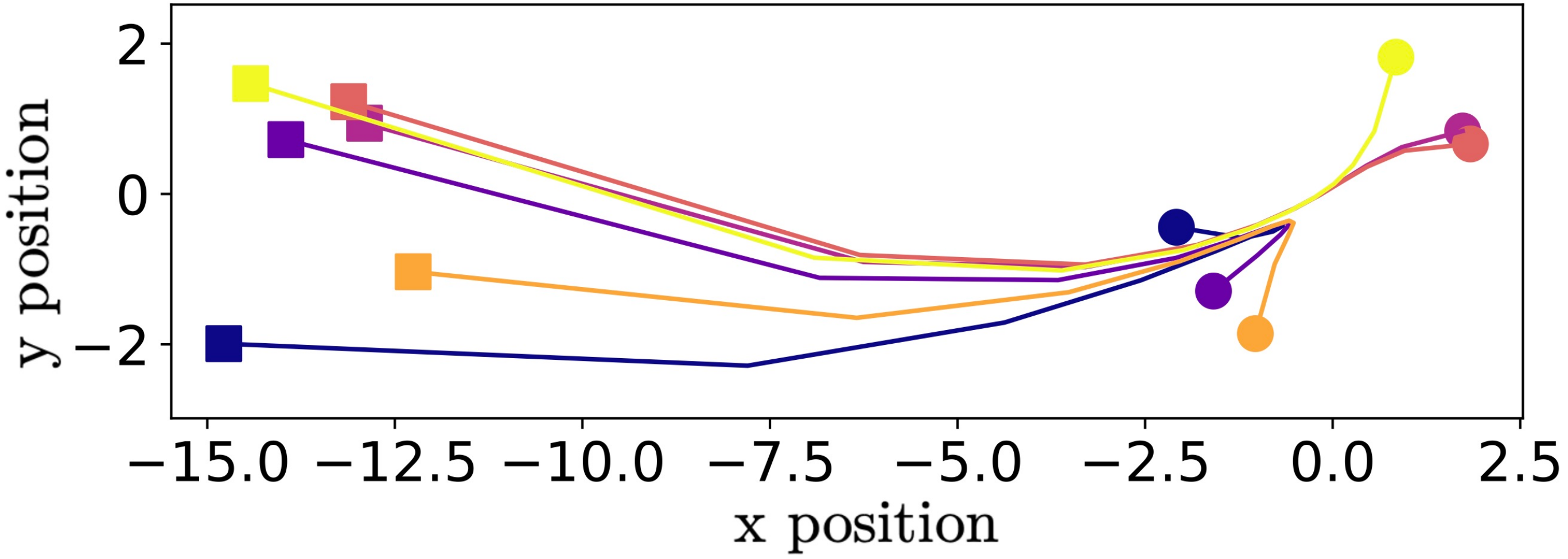}
\end{minipage}%
\caption{The left-hand subfigure depicts the target graph structure considered in \eqref{eq:graph_distance}, the right-hand subfigures shows the trajectory achieved, transitioning from squares to circles, with \(\eps = 0.5\). Note that in the final shape, the agents are successfully separated into 2 groups, respecting the graph structure. This is ensured by the term \(\GW_c(x_d)\).}
\label{fig:graph_formation_2}
\end{figure}

\section{Discussion and future work}\label{sec:conclusion}
We developed an SDP-relaxation approach for multi-agent shape control, guided by the Gromov-Wasserstein (GW) distance, to achieve a target formation within a constrained, minimum-energy optimal control framework. Drawing on insights from \cite{ref:JC:BTN:SHK:YSS:SDP-relaxation:2024}, our algorithm provides an optimality certificate, ensuring the exactness and global optimality of the SDP solutions. While the NP-hardness associated with the original GW problem is mitigated by this relaxation, several challenges remain, opening up various avenues for theoretical and numerical enhancements:
\begin{itemize}[leftmargin=*]
\item Enhancing the computational efficiency of our algorithm by introducing sparsification techniques to the SDP solutions. By selectively reducing the complexity of the SDP structure while retaining essential features, it may be possible to lower the computational burden without compromising accuracy.  

\item Incorporating entropy regularization into our framework, combined with sparsification-driven Sinkhorn-like algorithms, to address an approximate version of the original Gromov-Wasserstein (GW) problem. This approach could leverage the computational advantages of entropy-regularized optimal transport while maintaining a balance between precision and efficiency, and consequently could enable faster and more scalable solutions.

\item Investigating scenarios where external disturbances influence the system and designing a robust model predictive control (MPC) algorithm to stabilize the system while maintaining the desired formation. In such cases, the use of an unbalanced GW cost \cite{ref:GWML:unbalanced_GW} would be more appropriate, as it allows for greater flexibility in handling mismatched distributions and system perturbations. This extension could enable the algorithm to adapt to practical challenges and ensure reliable performance in real-world applications.
\end{itemize}

\acks{This work was supported by the JSPS KAKENHI under Grant Number JP21H04875.}

\bibliography{refs}

\end{document}